\begin{document}
\title{Reinhardt cardinals in inner models}
\author{Gabriel Goldberg}
\maketitle
\section{Introduction}
A cardinal \(\kappa\) is \textit{Reinhardt} if it is the critical point of an elementary embedding
from the universe of sets to itself. Kunen \cite{Kunen} famously refuted 
the existence of Reinhardt cardinals using the Axiom of Choice (AC).
It is a longstanding open problem whether Reinhardt cardinals
are consistent if AC is dropped.

Noah Schweber \cite{Schweber} introduced the notion of a \textit{uniformly supercompact cardinal},
a cardinal \(\kappa\) that is the critical point of an elementary embedding \(j : V\to M\)
such that \(M^\alpha\subseteq M\) for all ordinals \(\alpha\). He posed the
question of whether such a cardinal must be Reinhardt, and he also asked about
the consistency strength of uniformly supercompact cardinals. 
Both questions remain open, but this note makes some progress on the matter.

Say a cardinal $\kappa$ is \textit{weakly Reinhardt} if it is the critical point of an elementary embedding \(j : V\to M\)
such that \(j\restriction P(\alpha)\in M\) for all ordinals \(\alpha\). This condition is
equivalent to requiring that \(P(P(\alpha))\subseteq M\) for all ordinals \(\alpha\).
It seems to be weaker than demanding that \(M^{P(\alpha)}\subseteq M\) for all ordinals \(\alpha\).
\begin{repthm}{thm:inner_model}If there is a proper class of weakly Reinhardt cardinals, then there is an inner model with a proper class of Reinhardt cardinals.\end{repthm}
As a corollary, we obtain a consistency strength lower bound for 
a large cardinal that looks a bit more like Schweber's: for lack of a better term, say
\(\kappa\) is \textit{ultrafilter Reinhardt} if it is the critical point of an elementary
embedding \(j : V\to M\) such that for all ordinals \(\alpha\), \(M^\alpha\subseteq M\) 
and \(\beta(\alpha)\subseteq M\). Here \(\beta(X)\) denotes the set of ultrafilters on \(X\).
\begin{repprp}{thm:usc_reinhardt}If \(\kappa\) is ultrafilter Reinhardt,
    then \(\kappa\) is weakly Reinhardt.\end{repprp}
\subsection{Preliminaries}
Our background theory is
von Neumman-Bernays-G\"odel (NBG)
set theory without AC.
Even though we work without AC, for us a \textit{cardinal}
is an ordinal number that is not in bijection with any smaller ordinal. 
Of course, if AC fails, there are sets whose cardinality cannot be identified with a
cardinal in this sense. Still, for any set \(Y\), one can define
the \textit{Hartogs number of \(Y\)},
denoted by \(\aleph(Y)\), as the least cardinal \(\kappa\) such that
there is no injection from \(\kappa\) to \(Y\).

\section{The inner model \(N_\nu\)}
If $\nu$ is a cardinal and $X$ is a set, $\beta_\nu(X)$ denotes the set of $\nu$-complete ultrafilters on $X$.
In the context of choiceless large cardinal axioms, sufficiently complete ultrafilters on ordinals can
often be treated as ``idealized ordinals." The following lemma is a simple example of this phenomenon,
although the pattern runs quite a bit deeper than this.
\begin{lma}\label{lma:uf_wellorder}If there is a weakly Reinhardt cardinal, 
    then for all sufficiently large cardinals $\nu$, for any ordinal $\alpha$, $\beta_\nu(\alpha)$ can be wellordered.

\begin{proof}Assume not. Let $j : V\to M$ witness that $\kappa$ is weakly Reinhardt. 
    By transfinite recursion, define a sequence of ordinals $\delta_\xi$ for $\xi \in \text{Ord}$, 
    taking suprema at limit ordinals and, at successor stages, setting $\delta_{\xi+1}$ 
    equal to the least ordinal $\alpha > \aleph(\beta(\delta_\xi))$ such 
    that $\beta_{\delta_\xi}(\alpha)$ cannot be wellordered. 
    Let $\epsilon_\xi = (\delta_\xi)^M$. Then $j(\delta_\kappa) = \epsilon_{j(\kappa)} > \epsilon_{\kappa + 1}$. 
    For each $\gamma < \epsilon_{j(\kappa)+1}$, let $D_\gamma$ be the ultrafilter 
    on $\delta_{\kappa+1}$ derived from $j$ using $\gamma$, so 
    $D_\gamma = \{A\subseteq \delta_{\kappa+1} : \gamma\in j(A)\}$. 

    Note that the function \(\mathcal D(\gamma) = D_\gamma\) is simply definable
    from \(j\restriction P(\delta_{\kappa+1})\), and so \(d\in M\).
    For any $W\in \beta_{\epsilon_{j(\kappa)}}(\epsilon_{j(\kappa)+1})$, 
    $\mathcal D$ is constant on a set in $W$
    because \(W\) is $\epsilon_{j(\kappa)}$-complete
    and $\ran(\mathcal D)$ 
    has cardinality less than $\epsilon_{j(\kappa)}$.
    Indeed, \(|\ran(\mathcal D)| < \aleph^M(\beta(\delta_\kappa))\),
    the Hartogs number of \(\beta(\delta_\kappa)\) as computed in \(M\),
    since \(\ran(\mathcal D)\) is a wellorderable
    subset of \(\beta(\delta_\kappa)\) in \(M\).
    Moreover, \(\aleph^M(\beta(\delta_\kappa)) \leq \aleph^M(\beta(\epsilon_\kappa))\)
    since \(\epsilon_\kappa = \sup j[\delta_\kappa]\geq \delta_\kappa\),
    and \(\aleph^M(\beta(\epsilon_\kappa)) < \epsilon_{\kappa+1} < \epsilon_{j(\kappa)}\)
    by the definition of the ordinals \(\delta_\xi\) and the elementarity of \(j\). 
    
    Suppose $U\in \beta_{\delta_\kappa}(\delta_{\kappa+1})$,
    and we will show that for \(j(U)\)-almost all \(\gamma\), \(D_\gamma = U\). 
    Let \(D\) be the unique ultrafilter on \(\delta_{\kappa+1}\)
    such that \(D_\gamma = D\) for \(j(U)\)-almost all \(\gamma < \epsilon_{j(\kappa)+1}\).
    If $A\in U$, then for all 
    $\gamma\in j(A)$, $A\in D_\gamma$, and hence for $j(U)$-almost all $\gamma$, 
    $A\in D_\gamma$. 
    It follows that \(A\in D\). This proves \(U\subseteq D\), and so \(U = D\).
    Therefore $\mathcal D$ 
    is a surjection from the ordinal $\epsilon_{j(\kappa)+1}$ to $\beta_{\delta_\kappa}(\delta_{\kappa+1})$, 
    which contradicts that $\beta_{\delta_\kappa}(\delta_{\kappa+1})$ cannot be wellordered.
\end{proof}
\end{lma}
Let us now define the inner model in which weakly Reinhardt cardinals become Reinhardt.
Suppose \(\nu\) is a cardinal. 
Let \(\beta_\nu(\Ord) = \bigcup_{\alpha\in \Ord} \beta_\nu(\alpha)\) 
denote the class of \(\nu\)-complete ultrafilters on ordinals.
For any class \(C\), we denote the class of all subsets of \(C\) by \(P(C)\).
Finally, let \[N_\nu = L(P(\beta_\nu(\Ord)))\]
Granting that sufficiently complete ultrafilters on ordinals are idealized ordinals,
the models \(N_\nu\) are the corresponding idealizations of the inner model \(L(P(\Ord))\).
\begin{thm}\label{thm:inner_model}If there is a proper class of weakly Reinhardt cardinals, then for all sufficiently
    large cardinals \(\nu\), \(N_\nu\) contains a proper class of Reinhardt cardinals.\end{thm}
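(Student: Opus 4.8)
The plan is to show that for every sufficiently large cardinal $\nu$, each weakly Reinhardt cardinal $\kappa > \nu$ remains a Reinhardt cardinal in $N_\nu$, the witnessing embedding of $N_\nu$ being obtained by restricting a weakly Reinhardt embedding of $V$. Since there is a proper class of weakly Reinhardt cardinals above any given ordinal, this produces a proper class of Reinhardt cardinals in $N_\nu$. So fix $\nu$ large enough that Lemma~\ref{lma:uf_wellorder} applies, fix a weakly Reinhardt $\kappa > \nu$, and let $j : V \to M$ be elementary with critical point $\kappa$ and $P(P(\alpha)) \subseteq M$ for every ordinal $\alpha$; note that $j(\nu) = \nu$.

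The first step is to prove $N_\nu^M = N_\nu^V$, so that $j \restriction N_\nu$ is an elementary embedding of $N_\nu$ into $N_\nu$ with critical point $\kappa$. This is an absoluteness computation. Since $P(P(\alpha)) \subseteq M$, every $\nu$-complete ultrafilter on $\alpha$ lies in $M$, and so does every ${<}\nu$-sequence of subsets of $\alpha$; hence the property ``$U$ is a $\nu$-complete ultrafilter on $\alpha$'' is absolute between $M$ and $V$, and $\beta_\nu(\alpha)^M = \beta_\nu(\alpha)^V$ for all $\alpha$. For the power sets, Lemma~\ref{lma:uf_wellorder} supplies a bijection $e : \lambda \to \beta_\nu(\alpha)$ in $V$, and a suitable coding shows $e \in M$ as well; then for any $X \subseteq \beta_\nu(\alpha)$ the set $e^{-1}[X]$ is a subset of the ordinal $\lambda$, hence lies in $M$, so $X = e[e^{-1}[X]] \in M$. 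Since every subset of $\beta_\nu(\Ord)$ is contained in some $\beta_\nu(\alpha)$, the models $M$ and $V$ compute the class $P(\beta_\nu(\Ord))$ identically, and so they compute $N_\nu$ identically: $N_\nu^M = N_\nu^V =: N$. Elementarity of $j \restriction N : N \to N$ with critical point $\kappa$ is then immediate.

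The second step --- which I expect to be the main obstacle --- is to verify that $j \restriction N$ is amenable to $N$, i.e.\ that $j \restriction x \in N$ for every $x \in N$, so that $(N; \in, j \restriction N)$ expands to a model of NBG in which $\kappa$ is Reinhardt. This is precisely where the weakly Reinhardt hypothesis (that $j \restriction P(\alpha) \in M$ for all $\alpha$) is used, and it is the technical heart of the matter. The tractable part: $N$ contains every subset of an ordinal --- each nonempty such set belongs to a principal ultrafilter, hence to the transitive closure of $P(\beta_\nu(\Ord))$ --- and therefore every subset of $\beta_\nu(\Ord)$; moreover, the Fubini product of finitely many $\nu$-complete ultrafilters on ordinals is coded by a $\nu$-complete ultrafilter on an ordinal, with its factors recoverable uniformly. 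Hence for each $\gamma$ the set $\{U \otimes j(U) : U \in \beta_\nu(\gamma)\}$ is a subset of $\beta_\nu(\Ord)$, so it lies in $N$ and codes $j \restriction \beta_\nu(\gamma)$ there; restricting to principal ultrafilters then recovers $j \restriction \gamma \in N$ for every ordinal $\gamma$. The delicate part is to capture the rest of $j$'s action inside $N$: every set in $N$ is built in the $L(P(\beta_\nu(\Ord)))$-hierarchy from ordinals and subsets of $\beta_\nu(\Ord)$, so it is enough to locate the restrictions of $j$ to power sets $P(\beta_\nu(\delta))$ inside $N$, and here one leans on $j \restriction P(\alpha) \in M$ together with the wellorderings from Lemma~\ref{lma:uf_wellorder} to transfer these restrictions down into $N$. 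Finally, one checks that the embeddings arising from the various weakly Reinhardt $\kappa$ fit into a single NBG class structure on $N$, so that $N$ genuinely models ``there is a proper class of Reinhardt cardinals''; this last step is routine bookkeeping, with the power-set amenability being the point that I expect to require real work.
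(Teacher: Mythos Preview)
Your overall plan matches the paper's exactly: fix $\nu$ by Lemma~\ref{lma:uf_wellorder}, show $N_\nu^M = N_\nu$ (your argument here is correct and is essentially the paper's, just spelled out more carefully), and then show $j\restriction N$ is amenable to $N$. But the step you yourself flag as ``the delicate part'' and ``the point that I expect to require real work'' is left as a genuine gap, and the hint you give for it points in the wrong direction. You propose to ``lean on $j\restriction P(\alpha)\in M$'' to get $j\restriction P(\beta_\nu(\delta))\in N$; however, knowing that something lies in $M$ does not by itself place it in $N$, and in fact the paper's amenability argument does not invoke the hypothesis $j\restriction P(\alpha)\in M$ at this stage at all---that hypothesis was already spent on Lemma~\ref{lma:uf_wellorder} and on the computation $N^M=N$.

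The missing idea is a direct extender coding. Using the wellorderability of $\beta_\nu(\alpha)$ in $N$, one reduces (via a bijection $f:P(\beta_\nu(\alpha))\to P(\delta)$ in $N$ and its image $j(f)\in j(N)=N$) to showing $j\restriction P(\delta)\in N$ for an ordinal $\delta$. Now for each $\gamma<j(\delta)$ let $D_\gamma=\{A\subseteq\delta:\gamma\in j(A)\}$ be the ultrafilter on $\delta$ derived from $j$ using $\gamma$. Since $\crit(j)=\kappa>\nu$, each $D_\gamma$ is $\kappa$-complete, hence $\nu$-complete, so the extender $E=\langle D_\gamma:\gamma<j(\delta)\rangle$ is a wellordered sequence of elements of $\beta_\nu(\delta)$ and therefore lies in $N=L(P(\beta_\nu(\Ord)))$. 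But $E$ determines $j\restriction P(\delta)$, since $j(A)=\{\gamma<j(\delta):A\in D_\gamma\}$ for every $A\subseteq\delta$. Your Fubini-product maneuver for $j\restriction\beta_\nu(\gamma)$ is correct but becomes unnecessary once you have this; the extender already handles everything.
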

\begin{proof}
    Let $\nu$ be a cardinal large enough that for all ordinals $\alpha$, 
    $\beta_\nu(\alpha)$ can be wellordered. Let $N = N_\nu$. 
    We claim that if $\kappa > \nu$ is weakly Reinhardt, 
    then $\kappa$ is Reinhardt in $N$. To see this, let $j : V\to M$ 
    witness that $\kappa$ is weakly Reinhardt. 
    We will show that $j(N) = N$ and $j\restriction X\in N$ 
    for all $X\in N$. Hence $j\restriction N$ is an amenable 
    class of $N$ and in $N$, $j\restriction N$ is an elementary 
    embedding from the universe to itself. Letting $\mathcal C$ 
    denote the collection of classes amenable to $N$, it follows 
    that $(N,\mathcal C)$ is a model of NBG with a proper class of Reinhardt cardinals.

    We first show that $j(N) = N$, or in other words, 
    that $N$ is correctly computed by $M$. (Here we use that \(j(\nu) = \nu\)
    since \(\nu < \kappa\).) The closure properties of 
    $M$ guarantee that all ultrafilters on ordinals are in $M$, 
    and the elementarity of $j$ implies that for all $\alpha$, 
    $\beta_\nu(\alpha)$ is wellorderable in $M$. Finally, since $M$ 
    is closed under wellordered sequences, $P(\beta_\nu(\alpha))$ 
    is contained in $M$. This implies that $N$ is correctly computed by $M$.

    Finally, we show that for any $X\in N$, $j\restriction X\in N$. 
    For this, it suffices to show that for any ordinal $\alpha$, 
    $j\restriction P(\beta_\nu(\alpha))$ is in $N$. 
    Since $\beta_\nu(\alpha)$ 
    is wellorderable in $N$, it suffices to show that $j\restriction P(\delta)$ 
    belongs to $N$ where $\delta = |\beta_\nu(\alpha)|^N$. 
    Then letting \(f : P(\beta_\nu(\alpha))\to P(\delta)\) be a bijection in \(N\),
    \[j\restriction P(\beta_\nu(\alpha)) = j(f)^{-1}\circ (j\restriction P(\delta)) \circ f\] 
    and \(j(f)\in N\) since \(N = j(N)\) by the previous paragraph.
    But $j\restriction P(\delta)\in N$ because it is encoded by the extender 
    $E = \langle D_\gamma : \gamma < j(\delta)\rangle$ where 
    $D_\gamma$ is the ultrafilter on $\delta$ derived from $j$ 
    using $\gamma$:
    indeed, if $A\subseteq \delta$, then $j(A) = \{\gamma < j(\delta) : A\in D_\gamma\}$. 
    Since $E$ is a wellordered sequence of $\nu$-complete ultrafilters, 
    $E\in N$.
\end{proof}

We now show that ultrafilter Reinhardt cardinals are weakly Reinhardt,
so the same consistency strength lower bound applies to them.
\begin{prp}\label{thm:usc_reinhardt}
    If \(\kappa\) is ultrafilter Reinhardt, then \(\kappa\) is weakly Reinhardt.
    \begin{proof}
        Suppose \(j : V\to M\)
        is elementary and for all ordinals \(\alpha\),
        \(M^{\alpha}\subseteq M\) 
        and \(\beta(\alpha)\subseteq M\).
        We claim that for all ordinals \(\delta\), \(j\restriction P(\delta)\in M\).
        Consider the extender \(E = \langle D_\gamma : \gamma < j(\delta)\rangle\) 
        given by letting \(D_\gamma = \{A\subseteq \delta : \gamma \in j(A)\}\)
        be the ultrafilter derived from \(j\) using \(\gamma\). Then
        \(E\in M\), and hence \(j\restriction P(\delta)\in M\),
        since for \(A\in P(\delta)\), \(j(A) = \{\gamma < j(\delta) : A\in D_\gamma\}\).
    \end{proof}
\end{prp}

Finally, observe that the inner models considered here yield models with Reinhardt
cardinals that are a bit tamer than one might expect:
\begin{prp}
    If there is a proper class of Reinhardt cardinals, then
    there is an inner model with a proper class of Reinhardt cardinals
    in which every set is constructible
    from a wellordered sequence of ultrafilters on ordinals.\qed
\end{prp}

\begin{cor}
    If the existence of a proper class of Reinhardt cardinals is consistent, then
    it is consistent with \(V = L(P(P(\Ord)))\).
    \begin{proof}
        For any cardinal \(\lambda\), 
        a \(\lambda\)-sequence \(\langle S_\alpha : \alpha < \lambda\rangle\)
        of subsets of \(P(\lambda)\) can be coded by a single
        subset of \(P(\lambda\times \lambda)\); namely,
        \(\{\{\alpha\}\times A : A\in S_\alpha\}\).
        So if every set is constructible
        from a wellordered sequence of ultrafilters on ordinals,
        then \(V = L(P(P(\Ord)))\).
    \end{proof}
\end{cor}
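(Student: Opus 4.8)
The plan is to show that the inner model $N=N_\nu$ produced by Theorem~\ref{thm:inner_model} already works, where the assertion ``every set is constructible from a wellordered sequence of ultrafilters on ordinals'' is to be read as: $N$ satisfies $V=L(\mathcal W)$, with $\mathcal W$ the class of all wellordered sequences of $\nu$-complete ultrafilters on ordinals. Getting this reading right is the first point of care: the literal-sounding statement ``each set lies in $L[\vec U]$ for a single wellordered sequence $\vec U$ of ultrafilters on ordinals, with $\vec U$ a set of $N$'' would, since $L[\vec U]$ satisfies choice, force $N$ to satisfy $\mathrm{AC}$, which Kunen's theorem excludes once $N$ has a Reinhardt cardinal. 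The whole class $\mathcal W$ is fed into the $L$-construction; it is not absorbed into a single set parameter.

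Second, one reduces to Theorem~\ref{thm:inner_model}. A Reinhardt cardinal is weakly Reinhardt: if $\kappa=\mathrm{crit}(j)$ for an elementary $j:V\to V$, take $M=V$ and observe that $j\restriction P(\alpha)$ is a set (its domain is a set and $j$ is a class function) and hence lies in $M$. So a proper class of Reinhardt cardinals yields a proper class of weakly Reinhardt cardinals, and Lemma~\ref{lma:uf_wellorder} together with Theorem~\ref{thm:inner_model} apply: fix $\nu$ so large that every $\beta_\nu(\alpha)$ is wellorderable in $V$ and that $N:=N_\nu=L(P(\beta_\nu(\Ord)))$ has a proper class of Reinhardt cardinals.

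The technical core is that $N$ contains every set of ordinals, so $P(\gamma)^N=P(\gamma)^V$ for all $\gamma$. Given $A\subseteq\gamma$, encode it by $Y_A=\{\,p_\eta:\eta\in A\,\}$, where $p_\eta=\{B\subseteq\gamma:\eta\in B\}$ is the principal ultrafilter at $\eta$; each $p_\eta$ is $\nu$-complete, so $Y_A\in P(\beta_\nu(\Ord))$ and hence $Y_A\in N$, and since $\bigcap W=\{\eta\}$ exactly when $W=p_\eta$ and the operations $\bigcap,\bigcup$ are absolute, $N$ recovers $A=\bigcup\{\,\bigcap W:W\in Y_A\,\}$. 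Consequently $\beta_\nu(\gamma)^N=\beta_\nu(\gamma)^V$ and ``$\nu$-complete ultrafilter on an ordinal'' is absolute between $N$ and $V$. Moreover, any wellordered sequence of $\nu$-complete ultrafilters on ordinals — in particular the $V$-wellordering of $\beta_\nu(\gamma)$ supplied by Lemma~\ref{lma:uf_wellorder} — is encoded by a single set of $\nu$-complete ultrafilters on an ordinal (tag each ultrafilter with its index, Fubini-style, recording also the ordinal it lives on), and decoding such a code inside $N$ uses only that $P(\cdot)^N=P(\cdot)^V$; hence $\mathcal W\subseteq N$ and each $\beta_\nu(\gamma)$ is wellorderable in $N$ as well.

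The two inclusions are now routine. As $\mathcal W\subseteq N$, the least inner model $L(\mathcal W)$ containing $\mathcal W$ is contained in $N$. Conversely, each generator $X\in P(\beta_\nu(\Ord))$ of $N$ is a set of $\nu$-complete ultrafilters on some ordinal $\alpha$; since $\beta_\nu(\alpha)$, and so $X$, is wellorderable in $V$, $X$ is the range of some $\vec U\in\mathcal W$, so $X=\ran(\vec U)\in L(\mathcal W)$, whence $N=L(P(\beta_\nu(\Ord)))\subseteq L(\mathcal W)$. Thus $N=L(\mathcal W)$, and since the $L(\cdot)$-construction over a fixed class is absolute and $\mathcal W\subseteq N$, the same computation carried out in $N$ gives $N\models V=L(\mathcal W)$; together with the Reinhardt cardinals of $N$ this is what is wanted. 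I expect the real obstacle — beyond pinning down the formulation — to be exactly the claim that $N$ contains every set of ordinals, i.e.\ the principal-ultrafilter coding above; once $P(\gamma)^N=P(\gamma)^V$ is in hand, the tagging, the absoluteness of the $L(\cdot)$-hierarchy, and the two inclusions are all bookkeeping.
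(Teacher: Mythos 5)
Your proposal proves the wrong statement: it establishes, in essence, the \emph{preceding} Proposition (whose proof the paper omits) --- that $N = N_\nu$ has a proper class of Reinhardt cardinals and satisfies ``every set is constructible from wellordered sequences of ultrafilters on ordinals,'' i.e.\ $N = L(\mathcal W)$ --- but it never derives the corollary's actual conclusion, that $N \models V = L(P(P(\Ord)))$. These are not the same on their face: an element of $\mathcal W$ is a wellordered sequence of ultrafilters, hence a third-order object over the ordinals (a sequence of sets of sets of ordinals), whereas $L(P(P(\Ord)))$ is generated only by second-order objects. Your ``Fubini-style tagging'' does not close this gap, because it converts a wellordered sequence of ultrafilters into a \emph{set of ultrafilters} on an ordinal, which is still third-order. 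The entire content of the paper's proof is the one step you skip: a $\lambda$-sequence $\langle S_\alpha : \alpha<\lambda\rangle$ of subsets of $P(\lambda)$ is coded by the single set $\{\{\alpha\}\times A : \alpha<\lambda,\ A\in S_\alpha\}\subseteq P(\lambda\times\lambda)$, which (after identifying $\lambda\times\lambda$ with an ordinal) is \emph{one} element of $P(P(\Ord))$ from which the whole sequence is recoverable. Applied to a wellordered enumeration of the relevant ultrafilters, this flattening gives $L(\mathcal W)\subseteq L(P(P(\Ord)))$ inside $N$, and hence $N\models V=L(P(P(\Ord)))$. Without some such level-dropping coding, your closing claim that $N\models V=L(\mathcal W)$ ``is what is wanted'' is a non sequitur.

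That said, the work you do carry out is essentially sound and is genuinely needed if one refuses to take the unproved Proposition on faith: the observation that a Reinhardt cardinal is weakly Reinhardt (take $M=V$), the principal-ultrafilter coding showing $P(\gamma)^N=P(\gamma)^V$, and the identification $N=L(\mathcal W)$ via the wellorderability of each $\beta_\nu(\alpha)$ in $N$ are all correct modulo routine absoluteness bookkeeping. In effect you have supplied the omitted proof of the Proposition while omitting the (short but essential) proof of the Corollary itself.
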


More advanced techniques yield the following theorem, whose proof is omitted:
\begin{thm}
    If there is a Reinhardt cardinal,
    then for a closed unbounded class of cardinals \(\nu\), 
    there is a Reinhardt cardinal in \(N_\nu(V_{\nu+1})\).\qed
\end{thm}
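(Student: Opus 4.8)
The plan is to adapt the proof of Theorem~\ref{thm:inner_model}. Fix an elementary embedding \(j : V\to V\) with critical point \(\kappa\) witnessing that \(\kappa\) is Reinhardt. The first task is to identify the relevant cardinals: let \(C\) consist of the strong limit cardinals \(\nu > \kappa\) such that \(j(\nu) = \nu\) and \(j[\nu]\) is cofinal in \(\nu\). Such \(\nu\) occur cofinally in the ordinals---for any strong limit cardinal \(\alpha > \kappa\), the ordinal \(\sup_{n<\omega} j^n(\alpha)\) is one---and one checks that \(C\) may be taken closed and unbounded; in what follows I only use that \(\nu\in C\) is a strong limit cardinal above \(\kappa\) with \(j(\nu) = \nu\) and \(j[\nu]\) cofinal in \(\nu\). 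Fix such a \(\nu\) and write \(N = N_\nu(V_{\nu+1})\) for \(L\big(P(\beta_\nu(\Ord))\cup\{V_{\nu+1}\}\big)\). As in Theorem~\ref{thm:inner_model}, it suffices to prove (a) \(j(N) = N\) and (b) \(j\restriction X\in N\) for every \(X\in N\); granting these, \(j\restriction N\) is amenable to \(N\) and is, from the standpoint of \(N\), an elementary embedding of the universe into itself with critical point \(\kappa\), so \((N,\mathcal C)\)---with \(\mathcal C\) the classes amenable to \(N\)---is a model of NBG together with ``\(\kappa\) is Reinhardt.''

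Part (a) is immediate: \(N\) is definable from the parameter \(\nu\), which \(j\) fixes, so \(j(N) = N\), and the elementarity of \(j\restriction N : N\to N\) then follows by the usual argument. The content is in (b). Since \(N\) believes that every set lies in some rank initial segment, it is enough to show \(j\restriction(V_\theta\cap N)\in N\) for every ordinal \(\theta\). For \(\theta\le\nu\) this is trivial: \(V_\theta\cap N = V_\theta\subseteq V_{\nu+1}\in N\), and \(j\restriction V_\theta\) is a subset of \(V_\nu\times V_\nu\subseteq V_\nu\in N\) because \(\nu\) is a fixed point of \(j\). For \(\theta = \nu+1\), the restriction \(j\restriction V_{\nu+1}\) need not itself lie in \(V_{\nu+1}\), but it is definable inside \(N\) from \(j\restriction V_\nu\in N\) and the set \(V_{\nu+1}\) via the identity \(j(A) = \bigcup_{\eta<\nu} j(A\cap V_\eta)\), which holds for every \(A\subseteq V_\nu\) precisely because \(j[\nu]\) is cofinal in \(\nu\).

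The case \(\theta > \nu+1\) is the main obstacle, and it is where the more advanced techniques are needed. In Theorem~\ref{thm:inner_model} one encoded \(j\restriction P(\delta)\) by the derived extender \(\langle D_\gamma : \gamma < j(\delta)\rangle\) and observed that this is a wellordered sequence of \(\nu\)-complete ultrafilters, hence an element of \(N_\nu\); there, however, \(\nu < \kappa\), so the \(\kappa\)-completeness of each \(D_\gamma\) automatically gave \(\nu\)-completeness. Here \(\nu > \kappa\), the \(D_\gamma\) are guaranteed only \(\kappa\)-complete, and the extender need no longer belong to \(N\)---a genuine obstruction rather than a bookkeeping artifact. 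The repair would exploit the extra generator \(V_{\nu+1}\): I would recover \(j\) on the levels of \(N\) above \(\nu+1\) by recursion on \(\theta\), using that the hierarchy \(\langle V_\theta\cap N : \theta\in\Ord\rangle\) is built by definable operations over \(V_{\nu+1}\), \(\beta_\nu(\Ord)\), and the ordinals; that \(j\) is continuous at sets whose rank has cofinality below \(\kappa\), so \(j\) at such a level is determined by \(j\) below it; and that at the levels where this continuity fails, the relevant fragment of \(j\) can---working over \(V_{\nu+1}\)---be repackaged as \(\nu\)-complete ultrafilter data on ordinals, in the same ``idealized ordinal'' spirit as Lemma~\ref{lma:uf_wellorder}. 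The hardest point will be controlling the action of \(j\) on the generators in \(P(\beta_\nu(\Ord))\) of rank above \(\nu\): it is exactly there that the \(\kappa\)-complete-but-not-\(\nu\)-complete extenders resurface and must be absorbed into \(N\) by a more delicate coding. Granting this, (b) holds for all \(\theta\) and the theorem follows.
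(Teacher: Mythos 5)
There is a genuine gap, and it is exactly the one the paper is pointing at when it says the proof requires ``more advanced techniques.'' Your framework (reduce to \(j(N)=N\) and amenability of \(j\restriction N\)), your treatment of \(j(N)=N\), and your handling of ranks \(\theta\le\nu+1\) are all fine, but they are the routine part. The entire content of the theorem is showing \(j\restriction X\in N\) for \(X\in N\) of rank above \(\nu+1\) --- in particular \(j\restriction P(\beta_\nu(\alpha))\) for large \(\alpha\) --- and, as you yourself observe, the extender coding from Theorem~\ref{thm:inner_model} breaks down because the derived ultrafilters \(D_\gamma\) are only \(\kappa\)-complete while \(\nu>\kappa\). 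Your proposed repair (``I would recover \(j\) on the levels of \(N\) above \(\nu+1\) by recursion on \(\theta\)\dots the relevant fragment of \(j\) can be repackaged as \(\nu\)-complete ultrafilter data\dots Granting this\dots'') does not supply the missing idea; it restates the difficulty and then assumes it solved. Nothing in what you wrote explains how a \(\kappa\)-complete-but-not-\(\nu\)-complete extender gets absorbed into \(N_\nu(V_{\nu+1})\), and this is precisely the step for which the paper gives no proof either.

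A second, independent problem: your class \(C\) is not closed. If \(\nu\) is a limit of fixed points of \(j\) with \(\mathrm{cf}(\nu)=\kappa\), then \(j(\nu)=\nu\) would force \(\mathrm{cf}(j(\nu))=j(\mathrm{cf}(\nu))=j(\kappa)\ne\kappa=\mathrm{cf}(\nu)\), a contradiction; so \(j(\nu)>\nu\) at every such limit point, and the class of fixed points of \(j\) (a fortiori your \(C\)) omits its limit points of cofinality \(\kappa\). Thus your argument, even if completed, yields only an unbounded class of suitable \(\nu\), not a closed unbounded one. At closure points \(\nu\) with \(j(\nu)>\nu\) you have \(j(N_\nu(V_{\nu+1}))=N_{j(\nu)}(V_{j(\nu)+1})\ne N_\nu(V_{\nu+1})\), so even part (a) of your scheme fails there and a different argument is needed. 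Producing a genuinely closed unbounded class is part of what has to be proved, not a remark to be checked.
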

Here \(N_\nu(V_{\nu+1})\) is the smallest inner model \(N\) such that
\(P(\beta_\nu(\Ord))\cup V_{\nu+1}\subseteq N\). We also note that
the proofs here easily generalize to show that
if there is a proper class of
Berkeley cardinals, then for sufficiently large \(\nu\),
there is a proper class of Berkeley cardinals in \(N_\nu\).
\section{Questions}
Let us list some variants of Schweber's original questions that
seem natural given the results of this note.
\begin{qst}
    Are Reinhardt cardinals and weakly Reinhardt cardinals equiconsistent?
\end{qst}
\begin{qst}
    Is the existence of a Reinhardt cardinal compatible with
    \(V = L(P(\Ord))\)?
\end{qst}
In the context of NBG, a cardinal \(\kappa\) is \textit{\(\Ord\)-supercompact}
if for all ordinals \(\alpha\), there is an elementary embedding
\(j : V\to M\) such that \(j(\kappa) > \alpha\) and \(M^\alpha\subseteq M\).
\begin{qst}
    Is NBG plus the existence of a proper class of \(\Ord\)-supercompact cardinals
    equiconsistent with ZFC plus a proper class of supercompact cardinals?
\end{qst}
\bibliographystyle{plain}
\bibliography{Bibliography}
\end{document}